\documentclass{article} 
\usepackage{amssymb,amsmath,amsthm}
\usepackage{amscd}
\usepackage{hyperref}
\usepackage{epsfig}
\usepackage{amsfonts}
\usepackage{amssymb}
\usepackage{amsmath,enumerate}
\usepackage{commath}
\usepackage{euscript}
\usepackage{enumitem}
\usepackage{amscd}
\usepackage{xcolor}
\usepackage[ruled,vlined]{algorithm2e}
\usepackage[numbers,sort&compress]{natbib}

\newtheorem{theorem}{Theorem}[section]
\newtheorem{remark}{Remark}[section]

\newtheorem{definition}{Definition}[section]

\newtheorem{lemma}{Lemma}[section]
\newtheorem{proposition}{Proposition}[section]

\makeatletter
\newsavebox{\@brx}
\newcommand{\llangle}[1][]{\savebox{\@brx}{\(\m@th{#1\langle}\)}%
	\mathopen{\copy\@brx\kern-0.5\wd\@brx\usebox{\@brx}}}
\newcommand{\rrangle}[1][]{\savebox{\@brx}{\(\m@th{#1\rangle}\)}%
	\mathclose{\copy\@brx\kern-0.5\wd\@brx\usebox{\@brx}}}
\makeatother
\begin{document}
	\title{Existence Theorems on Quasi-variational Inequalities over Banach Spaces and its Applications to Time-dependent Pure Exchange Economy}
	\author{
		Asrifa Sultana\footnotemark[1] \footnotemark[2] , Shivani Valecha\footnotemark[2] }
	\date{ }
	\maketitle
	\def\thefootnote{\fnsymbol{footnote}}
	
	\footnotetext[1]{ Corresponding author. e-mail- {\tt asrifa@iitbhilai.ac.in}}
	\noindent
	\footnotetext[2]{Department of Mathematics, Indian Institute of Technology Bhilai, Raipur - 492015, India.
	}
	
	
		
	\begin{abstract}
		We study a class of quasi-variational inequality problems defined over infinite dimensional Banach space and deduce sufficient conditions for ensuring solutions to such problems under the upper semi-continuity and pseudomonotonicity assumptions on the map defining the inequalities. The special structure of the quasi-variational inequality enables us to show the occurrence of solutions for such inequalities based on the classical existence theorem for variational inequalities. This special type of quasi-variational inequalities is motivated by the pure exchange economic problems and Radner equilibrium problems for sequential trading game. Further, we study the solvability of the specific class of quasi-variational inequalities on Banach spaces in which the constraint map may admit unbounded values. Finally, we demonstrate the occurrence of dynamic competitive equilibrium for a time-dependent pure exchange economy as an application.
	\end{abstract}
	{\bf Keywords:}
	Variational inequality; Dynamic competitive equilibrium; Pseudomonotone map; Constraint map; Quasi-variational inequality\\
	{\bf Mathematics Subject Classification:}
	37N40, 58E35, 90C26
	
\section{Introduction}\label{intro}
\textcolor{black}{The notion of variational inequalities (VI) was initiated by Hartman and Stampacchia in 1966} \cite{Hartman},
concerning with partial differential equations. The VI
problem supplies us a tool to study systems of nonlinear equations \cite{Francisco-Pang,Hartman}, optimization problems \cite{Aussel_siam}, Nash equilibrium problems \cite{Aussel_2004,blum_Oettli} etc. The quasi-variational inequality (QVI) problem is indeed an extension of the VI problem where the constraint set is allowed
to rely on the variable. \textcolor{black}{We assume $X$ to be real Banach space having topological dual $X^{*}$ alongwith duality product $\langle\cdot,\,\cdot \rangle$. Suppose $D\subseteq X$ is a non-empty set.}
For given maps $T:D \rightarrow 2^{X^{*}}$ and $K:D \rightarrow 2^{D}$,
the QVI$(T,K)$ is to derive a point $\tilde{x} \in K(\tilde{x})$ so that
\begin{equation}
	~\exists\,\tilde{x}^{*} \in T(\tilde{x})~\textrm{meeting}~
	\langle \tilde{x}^{*},z-\tilde{x} \rangle \geq 0,~\textrm{for~each}~ z \in K(\tilde{x}). \label{Intro_eq1}
\end{equation}
This problem is difficult to solve computationally compared to variational inequality problem due to the fact that the constraint set is relying on the variable. Very few results for ensuring solutions for QVI$(T,K)$ are available in the literature such as the result due to Tan \cite{Tan} by assuming the upper semi-continuity property of the defined map $T$ whereas Aussel and Cotrina \cite{Aussel_existence} proved an existence result under weaker continuity and quasimonotone assumption on $T$. Further, Kien et al. \cite{Kein} derived a result for QVI$(T,K)$ by assuming that the set containing all fixed points of $K$ turns out to be closed.

It is worth to notice that every solution of QVI$(T,K)$ always a fixed point of the constraint map $K$. Due to this fact, one can think immediately to determine a solution for QVI$(T,K)$ by obtaining first all points $x$ with $x\in K(x)$ and then solve the parametric variational inequality VI$(T,K(x))$.
Aussel and Sagratella \cite{Aussel-Sagratella} have noticed that this method will not work for all QVI$(T,K)$ unless the QVI problem has a special structure.

Recently, Aussel et al. \cite{Aussel_donato_Asrifa} have studied a special type of QVI problem where some solutions can be obtained by deriving solutions of a corresponding VI problem. This special type of QVI problem is
actually motivated by the Radner equilibrium problem \cite{Aussel_donato_Asrifa,Donato_Milasi_Villanacci} constructed from sequential trading game. Indeed, this special type of  QVI problem over Banach spaces is formed as: given set-valued mappings
$F:X \rightarrow 2^{X^{*}}$ and
$K:D \rightarrow 2^{X}$, and a function $f:X \rightarrow X^{*}$, the problem QVI$(F,f,K)$ is to search for $(\tilde{d},\tilde{x}) \in D \times K(\tilde{d})$ so that there is
$\tilde{x}^{*} \in F(\tilde{x})$ satisfying
\begin{eqnarray}
	~\langle f(\tilde{x}),d-\tilde{d} \rangle+\langle \tilde{x}^{*},z-\tilde{x} \rangle  \geq 0,~\textrm{for~each}~ (d,z) \in D \times K(\tilde{d}).
	\label{def1_eq2_infinite}
\end{eqnarray}
The authors in \cite{Aussel_donato_Asrifa} derived existence results for such QVI $(\ref{def1_eq2_infinite})$ problem over finite dimensional spaces under weaker semi continuity and quasimonotone assumption on $F$. As an application, they have derived the occurrence of Radner equilibrium \cite{Aussel_donato_Asrifa} of certain sequential trading game. Few more works in this direction can be seen in Donato et al. \cite{milasicon,Donato_Milasi,Donato_Milasi_Villanacci}.

In this article we have considered this special type of quasi-variational inequalities over infinite dimensional Banach space and provide sufficient conditions for the occurrence of solutions for such QVI$(F,f,K)$ $(\ref{def1_eq2_infinite})$ problem on Banach space.
Because of the special structure of the considered quasi-variational inequalities, we first establish the presence of a solution for such QVI $(\ref{def1_eq2_infinite})$ by applying a theorem on existence of solutions for variational inequalities. In this case, the result is proved under the upper semi-continuity and pseudomonotonicity assumption on $F$. Further, considering the fact that QVI$(F,f,K)$ $(\ref{def1_eq2_infinite})$ is indeed a quasi-variational inequality, we have provided an alternative existence result for such problems where the map $F$ is assumed to be convex valued rather than $F$ is pseudomonotone. Finally, an existence result for QVI$(F,f,K)$ $(\ref{def1_eq2_infinite})$ problem having an unbounded constraint map is derived. \textcolor{black}{As an application, we have demonstrated the occurrence of dynamic competitive equilibrium for a time-dependent pure exchange economy at the end of this article.}

\section{Preliminaries and notations}\label{Prel}

We now call back few definitions, symbols and properties of the
variational notions which will be required for our main results in the upcoming section.

We assume $X$ to be real Banach space having topological dual $X^{*}$ and duality pairing $\langle\cdot,\,\cdot \rangle$. \textcolor{black}{For $D \subset X$, we use notations conv($D$) and $\overline {\textrm{conv}}(D)$ to indicate the convex hull and closed convex hull, respectively.} For $T:D \rightarrow 2^{X^{*}}$, the collection of all solutions $S(T,D)$ for the \textcolor{black}{VI problem} introduced by Stampacchia is given as,
$$\textcolor{black}{\{\tilde{x} \in D: \text{there is}\, \tilde{x}^{*} \in T(\tilde{x})~\textrm{satisfying}~\langle \tilde{x}^{*},y-\tilde{x} \rangle \geq 0,~\textrm{for~any}~ y \in D\},}$$
whereas $M(T,D)$ is symbolized for the collection of all solutions of the VI problem introduced by Minty and it is described as
$$\{\tilde{x} \in D: \langle y^{*},y-\tilde{x} \rangle \geq 0,~ \textcolor{black}{\textrm{for~each}}~y~\text{in}~D ~\textrm{and}~ y^{*}~\text{in}~T(y)\}.$$
One can check that \textcolor{black}{if the set $D$ is convex closed}, then the set $M(T,D)$ also becomes convex as well as closed.

\textcolor{black}{We call back that $T:D \rightarrow 2^{X^{*}}$ is known as \textit{pseudomonotone} map on the set $D$ if for each $x,y \in D$ and some $x^{*}$ in $T(x)$ satisfying $\langle x^{*},y-x\rangle \geq 0$ we have,
	$\langle y^{*},y-x \rangle \geq 0,~\textrm{for~all}~ y^{*}$ in $T(y).$}

\textcolor{black}{The reader may find in \cite{Aubin}, the concept of upper semi-continuous (u.s.c.) set-valued maps. Below-mentioned} proposition concerning with upper semi-continuity will be used in the sequel.
\textcolor{black}{\begin{proposition}\label{subnet_prop1}\cite[Lemma 1.10]{ansari}
		Consider $X$ is a Banach space and $T:X \rightarrow 2^{X^{*}}$ ($X^{*}$ being topological dual of $X$) is a non-empty compact valued mapping. Then, $T$ is u.s.c. at a point $y \in X$ iff for any sequence $y_n \rightarrow y$ and $y_n^{*} \in T(y_n)$, a subsequence $(y_{n_m}^{*})_m$ of $(y_n^{*})_n$ is obtained in such a way that $y_{n_m}^{*}\rightarrow y^*$ for some $y^*\in T(y)$.
\end{proposition}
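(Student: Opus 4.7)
The plan is to establish the biconditional by handling the two implications separately; both follow the standard template for upper semi-continuity of compact-valued set-valued maps.

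For the forward direction, I would start with the assumption that $T$ is u.s.c.\ at $y$ and pick an arbitrary sequence $y_n \to y$ together with selectors $y_n^* \in T(y_n)$. For every integer $k \geq 1$, the set $V_k := T(y) + \frac{1}{k} B_{X^*}$ is an open neighborhood of the compact set $T(y)$, so by the definition of u.s.c.\ there exists $N_k$ with $T(y_n) \subseteq V_k$ whenever $n \geq N_k$. Letting $z_n \in T(y)$ realize the distance $d(y_n^*, T(y))$ (attained by compactness of $T(y)$), this forces $\|y_n^* - z_n\| \to 0$. By the same compactness, a subsequence $z_{n_m}$ converges to some $y^* \in T(y)$; the triangle inequality then gives $y_{n_m}^* \to y^*$, which is the desired subsequence.

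For the reverse implication I would argue by contraposition. If $T$ is not u.s.c.\ at $y$, then there exists an open set $V \supseteq T(y)$ such that for every $n \geq 1$ one can find $y_n \in B(y, 1/n)$ with $T(y_n) \not\subseteq V$, hence a choice $y_n^* \in T(y_n) \setminus V$. The sequence $y_n$ converges to $y$, so by the hypothesized sequential property some subsequence $(y_{n_m}^*)$ converges to a limit $y^* \in T(y)$. But each $y_{n_m}^*$ lies in the closed complement $X^* \setminus V$, so its limit $y^*$ does too, contradicting $T(y) \subseteq V$.

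The main technical point lies in the forward direction: the defining property of u.s.c.\ only guarantees that the values $y_n^*$ eventually concentrate in arbitrarily small enlargements of $T(y)$, which is a priori weaker than convergence of any subsequence. The bridge is supplied by the compactness (and hence closedness) of $T(y)$, which both ensures the distance $d(y_n^*, T(y))$ is attained by some $z_n \in T(y)$ and permits extracting a convergent sub-subsequence from the nearest-point selections $z_n$. Once this bookkeeping is in place, both directions are short and formal, with no further subtleties about the choice of topology on $X^*$.
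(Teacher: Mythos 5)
The paper does not prove this proposition; it is quoted verbatim from \cite[Lemma 1.10]{ansari} and used as a black box, so there is no internal proof to compare against. Your argument is the standard and correct one for the norm topology on $X^{*}$: the forward direction correctly bridges the gap between ``eventually inside every enlargement $T(y)+\tfrac{1}{k}B_{X^{*}}$'' and subsequential convergence via nearest-point selections in the compact set $T(y)$, and the contrapositive for the reverse direction is sound since $X^{*}\setminus V$ is closed. The only point worth making explicit is that both the compactness of $T(y)$ and the convergence $y_{n_m}^{*}\to y^{*}$ are taken in the norm topology of $X^{*}$, which is how the paper uses the result.
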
}

\textcolor{black}{One can revise the concepts related to lower semi-continuity and closeness of set-valued map from \cite{Aubin}.
	The below-mentioned lemma will have important contribution in the sequel as it provides the suitable conditions under which the restricted map becomes lower semi-continuous (l.s.c.).
	\begin{lemma}\label{lemma1_infin}\cite{cot2}
		Consider $X$ is a Banach space and $D\subset X$ is non-empty convex. Let $K:D \rightarrow 2^{X}$ be a convex-valued l.s.c. mapping. Define a map $K_{r} :D \rightarrow 2^{X}$ as
		$K_{r}(d) = K(d)\cap \bar{B}(0,r)$ for some $r>0$. Then the map $K_{r}$ is l.s.c. if  $K(d)\cap B(0,r)\neq \emptyset$ for any $d \in D$.
\end{lemma}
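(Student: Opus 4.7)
To establish the lower semi-continuity of $K_r$, my plan is to work with the sequential characterisation: given $d_0 \in D$, $x_0 \in K_r(d_0)$, and $d_n \to d_0$, I want to produce (eventually) a sequence $x_n \in K_r(d_n)$ converging to $x_0$. The key observation is that the hypothesis $K(d) \cap B(0,r) \neq \emptyset$ guarantees that each value $K_r(d)$ already meets the \emph{open} ball, so the subtlety can only arise when $x_0$ lies on the sphere $\|x_0\| = r$; on the interior the constraint $\bar{B}(0,r)$ is inactive and the lower semi-continuity of $K$ transfers directly.

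First I would dispose of the interior case $\|x_0\| < r$. Since $K$ is l.s.c.\ at $d_0$, there exist $x_n \in K(d_n)$ with $x_n \to x_0$. By continuity of the norm, $\|x_n\| < r$ for all sufficiently large $n$, so $x_n \in K_r(d_n)$ eventually; the finitely many leading terms can be replaced by arbitrary elements of $K_r(d_n)$, which is non-empty by hypothesis.

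Next, for the boundary case $\|x_0\| = r$, I would use the standing assumption to pick $y_0 \in K(d_0) \cap B(0,r)$ and form the convex combination $x_0^t = (1-t)x_0 + t y_0$ for $t \in (0,1]$. Convexity of $K(d_0)$ gives $x_0^t \in K(d_0)$, while the triangle inequality together with $\|y_0\| < r$ yields
\[
\|x_0^t\| \leq (1-t)\|x_0\| + t\|y_0\| < (1-t)r + tr = r,
\]
so $x_0^t \in K_r(d_0) \cap B(0,r)$ and $x_0^t \to x_0$ as $t \to 0^+$. Applying the interior case to $x_0^{1/k}$ for each $k \in \mathbb{N}$, I obtain a sequence $(z_n^{1/k})_n$ with $z_n^{1/k} \in K_r(d_n)$ and $z_n^{1/k} \to x_0^{1/k}$. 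A diagonal extraction then produces indices $k(n) \to \infty$ and points $x_n := z_n^{1/k(n)} \in K_r(d_n)$ with $x_n \to x_0$.

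The main obstacle I anticipate is precisely the diagonal bookkeeping at the end: one has to let $k(n) \to \infty$ slowly enough that $z_n^{1/k(n)}$ is already close to $x_0^{1/k(n)}$ in the first approximation, while simultaneously fast enough that $x_0^{1/k(n)} \to x_0$. This is a standard but unavoidable two-parameter limit, and it is the only non-formal ingredient in the argument; the role of the hypothesis $K(d) \cap B(0,r) \neq \emptyset$ is exactly to supply the auxiliary point $y_0$ that makes this approximation possible, so one should expect that dropping it allows $K_r$ to fail lower semi-continuity precisely at the sphere.
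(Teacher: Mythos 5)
Your argument is correct, but note that the paper does not actually prove this lemma: it is imported verbatim from the cited reference (Cotrina--Hantoute--Svensson), so there is no in-paper proof to compare against. Your proof is a valid sequential rendering of the standard argument, and the two key ideas --- that the constraint $\bar B(0,r)$ is inactive when $\|x_0\|<r$, and that the hypothesis $K(d_0)\cap B(0,r)\neq\emptyset$ together with convexity lets you slide a boundary point $x_0$ into the open ball along the segment toward $y_0$ --- are exactly the ones the reference uses. The difference is one of formulation: the reference works with the open-set definition of lower semi-continuity rather than the sequential one, which makes the proof shorter and removes the diagonal extraction entirely. Concretely, given an open set $V$ meeting $K_r(d_0)$, one picks $x_0\in K(d_0)\cap\bar B(0,r)\cap V$ and $y_0\in K(d_0)\cap B(0,r)$, observes that $x_0^t=(1-t)x_0+ty_0$ lies in $K(d_0)\cap B(0,r)\cap V$ for $t$ small (since $V$ is open and $\|x_0^t\|<r$), and then applies lower semi-continuity of $K$ to the open set $B(0,r)\cap V$; this yields a neighbourhood $U$ of $d_0$ on which $K(d)\cap B(0,r)\cap V\neq\emptyset$, hence $K_r(d)\cap V\neq\emptyset$. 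Your two-parameter diagonal limit is sound (choose $N_k$ increasing so that $\|z_n^{1/k}-x_0^{1/k}\|<1/k$ for $n\geq N_k$ and set $x_n=z_n^{1/k}$ for $N_k\leq n<N_{k+1}$), and sequences do suffice here because the norm topology is metrizable, but it is bookkeeping you can avoid by arguing with open sets from the start. Both versions make clear that the hypothesis $K(d)\cap B(0,r)\neq\emptyset$ is what rescues lower semi-continuity on the sphere, as you correctly point out.
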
}

The upcoming theorem for quasi-variational inequalities obtained by Tan \cite{Tan} will be subsequently required for ensuring the presence of solutions for QVI$(G,f,K)$ $(\ref{def1_eq2_infinite})$.
\textcolor{black}{\begin{theorem}\cite{Tan}\label{Thm_Tan}
		Consider a nonempty convex compact set $D$ in a locally convex topological vector space $X$. Assume $K:D\rightarrow 2^{D}$ and $T:D \rightarrow 2^{X^{*}}$ (considering $X^{*}$ as topological dual of $X$) are mappings meeting the conditions:
		\begin{enumerate}
			\item[(i)] $K$ is non-empty convex compact valued l.s.c. mapping with closed graph;
			\item [(ii)]$T$ fulfills the upper semi-continuity property with $T(x)$ being non-empty convex compact for each $x$ in $D$.
		\end{enumerate}
		Then QVI\,$(T,K)$ $(\ref{Intro_eq1})$ contains a solution.
\end{theorem}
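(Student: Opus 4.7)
The plan is to recast QVI\,$(T,K)$ as a fixed-point problem for a set-valued map on a product space and apply the Kakutani--Fan--Glicksberg theorem. The motivating observation is that a pair $(\tilde{x}, \tilde{x}^{*})$ solves the QVI precisely when $\tilde{x} \in K(\tilde{x})$ minimizes the continuous linear functional $\langle \tilde{x}^{*}, \cdot \rangle$ over $K(\tilde{x})$ while $\tilde{x}^{*} \in T(\tilde{x})$; this decouples the quasi-variational structure into a ``best-response'' component for $\tilde{x}$ and a ``selection'' component for $\tilde{x}^{*}$.

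First I would construct a compact convex set $E \subseteq X^{*}$ that will host the dual variable. Since $T$ is u.s.c. with nonempty compact values on the compact set $D$, the image $T(D) = \bigcup_{x \in D} T(x)$ is compact, and in the locally convex setting its closed convex hull $E := \overline{\textrm{conv}}(T(D))$ is compact convex with $T(x) \subseteq E$ for every $x \in D$. Define $\Psi : D \times E \to 2^{D \times E}$ by
\[
\Psi(x, x^{*}) = A(x, x^{*}) \times T(x), \qquad A(x, x^{*}) = \Bigl\{ y \in K(x) : \langle x^{*}, y \rangle = \min_{z \in K(x)} \langle x^{*}, z \rangle \Bigr\}.
\]
A fixed point $(\tilde{x}, \tilde{x}^{*}) \in \Psi(\tilde{x}, \tilde{x}^{*})$ is then exactly a solution of QVI\,$(T,K)$.

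To invoke Kakutani--Fan--Glicksberg I would verify that $\Psi$ has nonempty convex compact values and closed graph. The set $A(x, x^{*})$ is nonempty and compact because the continuous linear functional $\langle x^{*}, \cdot \rangle$ attains its minimum on the nonempty compact set $K(x)$, and it is convex as the intersection of $K(x)$ with a closed hyperplane; the analogous properties for $T(x)$ are built into the hypotheses. The closed-graph property of $T$ follows from its upper semi-continuity together with compact-valuedness. For $A$, if $(x_n, x_n^{*}) \to (x, x^{*})$ and $y_n \in A(x_n, x_n^{*})$ with $y_n \to y$, then $y \in K(x)$ by the closed graph of $K$; given any $z \in K(x)$, lower semi-continuity of $K$ at $x$ produces $z_n \in K(x_n)$ with $z_n \to z$, and passing to the limit in $\langle x_n^{*}, y_n \rangle \leq \langle x_n^{*}, z_n \rangle$ yields $\langle x^{*}, y \rangle \leq \langle x^{*}, z \rangle$, so $y \in A(x, x^{*})$.

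The step I expect to be most delicate is precisely this limit transition: one must equip $X^{*}$ with a topology (the weak-$*$ topology being the natural candidate) under which $E$ remains compact and yet the pairing $\langle \cdot, \cdot \rangle$ stays jointly continuous on the relevant compact subsets of $X \times X^{*}$. Once the topological bookkeeping is handled, Kakutani--Fan--Glicksberg delivers a fixed point of $\Psi$, and by the decoupling observation above this fixed point is a solution of QVI\,$(T,K)$.
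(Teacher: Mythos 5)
The paper does not prove this statement: it is imported verbatim from Tan \cite{Tan} and used as a black box, so there is no internal proof to compare against. Your fixed-point reduction is, however, essentially the classical route to results of this type, and the skeleton is sound: the decoupling of the QVI into the argmin map $A$ and the selection map $T$, the convexity, compactness and nonemptiness of the values of $\Psi$, and the closed-graph verification for $A$ via lower semicontinuity of $K$ together with the closed graph of $K$ are all correct as written.

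Two points remain genuine gaps at the stated level of generality. First, in an arbitrary locally convex space the closed convex hull of a compact set need not be compact; to guarantee that $E=\overline{\mathrm{conv}}(T(D))$ is compact you need quasi-completeness of $X^{*}$ in the chosen topology, or, in the weak-$*$ setting over a Banach space, Alaoglu's theorem plus the boundedness of $T(D)$. Second, the limit transition in $\langle x_n^{*}, y_n\rangle \leq \langle x_n^{*}, z_n\rangle$ requires more than separate continuity of the pairing: writing $\langle x_n^{*}, y_n\rangle - \langle x^{*}, y\rangle = \langle x_n^{*}, y_n-y\rangle + \langle x_n^{*}-x^{*}, y\rangle$, the first term is controlled only if $E$ is equicontinuous, which is automatic when $X$ is a Banach space and $E$ is weak-$*$ compact (hence norm bounded) but fails to be automatic in a general locally convex space. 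Relatedly, outside the metrizable case the closed-graph check must be run with nets rather than sequences. None of this invalidates the strategy --- it is exactly the bookkeeping you flagged as delicate --- but as written the argument is only complete for the Banach-space instances this paper actually uses, not for Tan's full locally convex statement.
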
}
\textcolor{black}{A real-valued function $\psi$ defined on a Banach space $X$ is known as quasi-concave \cite{Aussel_siam} if for every pair $x,y\in X$ and $s\in [0,1]$,
	$$ \psi(sx+(1-s)y)\geq \min \{\psi(x),\psi(y)\}.$$
	It is worth noting that the adjusted normal operators are required in place of classical derivatives and sub-differential operators for obtaining solutions to optimization problems through variational inequalities in the case of a non-smooth quasi-concave function (for e.g. see \cite{Aussel_chapter,Aussel_siam}).} 

\textcolor{black}{Let us recall the concept of adjusted normal operator $N^a_{-\psi}$ associated to a quasi-concave function $\psi$. Any function $\psi:X\rightarrow \mathbb{R}$ is quasi-concave iff sublevel set of the corresponding quasi-convex function $-\psi$, defined as, $S_{-\psi}(x)=\{z\in X : (-\psi)(z)\leq (-\psi)(x)\}$ is convex for every $x$ in $X$ (see \cite{Aussel_chapter}). Suppose that $S^a_{-\psi}(x)$ indicates an adjusted sublevel set corresponding to a quasi-convex function $-\psi$ for any $x\in X$, which is formed according to \cite[Definition 2.3]{Aussel_siam}. Then, we recollect the notion of adjusted normal operator $N^a_{-\psi}:X\rightarrow 2^{X^*}$ from \cite{Aussel_siam},
	$$ N^a_{-\psi}(x)= \{x^*\in X^*:\langle x^*, z-x\rangle \leq 0,~\forall\, z\in S^a_{-\psi}(x)\}.$$
	As studied in \cite{Aussel_siam}, these operators satisfy generalized monotonicity and some specific continuity properties simultaneously.}

\textcolor{black}{\section{Existence Theorems on Quasi-variational Inequality Problems}\label{mainresult}
	\subsection{Case of Bounded Constraint Maps}}\label{exist}

Our goal is to \textcolor{black}{give the} sufficient conditions \textcolor{black}{which ensure the} occurrence of solutions for the considered special quasi-variational \textcolor{black}{inequality problems} QVI$(F,f,K)$ $(\ref{def1_eq2_infinite})$ over Banach spaces. The central tool is \textcolor{black}{to construct a VI problem associated to this} special QVI$(F,f,K)$ $(\ref{def1_eq2_infinite})$ problem. Then we aim to deduce the occurrence of solution for such QVI$(F,f,K)$ by \textcolor{black}{employing a preliminary existence theorem on VI problems} depending on the observation that any vector solving the corresponding VI problem \textcolor{black}{yields a solution of} QVI$(F,f,K)$ $(\ref{def1_eq2_infinite})$ problem. In this case, the result is proved under the upper semi-continuity and pseudomonotonicity assumption on $F$.

\begin{theorem}\label{Thm1_infinte_new}
	Let $D$ be a non-empty convex
	compact subset of a Banach space $X$ and $f:X \rightarrow X^{*}$ be a continuous affine function. Assume $F:X \rightarrow 2^{X^{*}}$ and
	$K:D \rightarrow 2^{X}$ are mappings meeting the conditions:
	\begin{enumerate}
		\item[(i)]$K$ is non-empty convex valued lower semicontinuous map with closed graph;
		\item [(ii)]$F$ is non-empty compact valued map satisfying the upper semi-continuity and pseudomonotonicity property on the set \textrm{conv}$(K(D))$.
	\end{enumerate}
	Therefore the QVI\,$(F,f,K)$ $(\ref{def1_eq2_infinite})$ contains a solution whenever the set $K(D)$ becomes compact.
\end{theorem}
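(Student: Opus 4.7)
My plan is to exploit the product structure of the test set $D\times K(\tilde d)$ by decoupling QVI$(F,f,K)$ into the two conditions (a) $\tilde d$ minimises the linear functional $\langle f(\tilde x),\cdot\rangle$ on $D$, and (b) $\tilde x$ solves the parametric Stampacchia VI for $F$ on the convex compact set $K(\tilde d)$. I would then attack (b) with a classical variational-inequality existence result and recombine it with (a) by a fixed-point theorem, keeping the whole construction inside the convex compact ambient set $D\times C$, where $C:=\overline{\mathrm{conv}}(K(D))$ is convex and compact by Mazur's theorem (since $K(D)$ is compact in the Banach space $X$).

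I would first upgrade the regularity of $K$. Each $K(d)$ is closed by hypothesis~(i) (closed graph) and is a convex subset of the compact set $K(D)\subseteq C$, so it is non-empty, convex, and compact. The closed graph of $K$ together with the compactness of $K(D)$ also gives upper semicontinuity of $K$; combined with the assumed lower semicontinuity, $K$ is therefore continuous on $D$. For every $d\in D$ I would then invoke a classical existence theorem for pseudomonotone upper semicontinuous compact-valued variational inequalities on a convex compact set, applied to $F$ on $K(d)\subseteq\mathrm{conv}(K(D))$, to obtain a non-empty Stampacchia solution set $S(d)\subseteq K(d)$. Pseudomonotonicity of $F$ forces $S(d)$ to coincide with the Minty solution set on $K(d)$, so $S(d)$ is convex and closed. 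A subsequence argument combining continuity of $K$ with Proposition~\ref{subnet_prop1} (used to extract convergent selections inside the u.s.c.\ compact-valued $F$) should then establish upper semicontinuity of the map $d\mapsto S(d)$.

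On the other coordinate, for every $x\in C$ I would set $\Delta(x):=\arg\min_{d\in D}\langle f(x),d\rangle$, which is non-empty, convex, and compact, and which depends upper semicontinuously on $x$ by continuity of $f$ and Berge's maximum theorem. The product map $\Phi:D\times C\to 2^{D\times C}$ given by $\Phi(d,x):=\Delta(x)\times S(d)$ therefore has non-empty convex compact values and is upper semicontinuous, and the Fan--Glicksberg fixed-point theorem applied on the convex compact set $D\times C$ in the locally convex space $X\times X$ supplies a point $(\tilde d,\tilde x)\in\Phi(\tilde d,\tilde x)$. Unpacking this, $\tilde x\in S(\tilde d)$ gives $\tilde x\in K(\tilde d)$ together with some $\tilde x^{*}\in F(\tilde x)$ satisfying $\langle\tilde x^{*},z-\tilde x\rangle\ge 0$ for every $z\in K(\tilde d)$, while $\tilde d\in\Delta(\tilde x)$ gives $\langle f(\tilde x),d-\tilde d\rangle\ge 0$ for every $d\in D$; adding these two inequalities produces the desired QVI$(F,f,K)$ solution.

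I expect the main obstacle to be the upper semicontinuity of the parametric solution map $d\mapsto S(d)$: one has to pass to the limit simultaneously in the constraint set $K(d)$ (using continuity of $K$) and inside the u.s.c.\ compact-valued map $F$ (using Proposition~\ref{subnet_prop1}), and then exploit the Minty characterisation of $S(d)$ provided by pseudomonotonicity to recover membership in the limiting solution set. Once this stability step is secured, the rest of the argument reduces to routine verifications of the hypotheses of Berge's theorem and of the Fan--Glicksberg fixed-point theorem on the convex compact set $D\times C$.
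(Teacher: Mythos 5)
Your proposal is correct and follows essentially the same route as the paper: both hinge on the parametric Stampacchia solution map $d\mapsto S(F,K(d))$, its identification with the Minty solution set via pseudomonotonicity (yielding non-empty closed convex values), and a closed-graph argument for its upper semicontinuity on the compact set $K(D)$. The only real difference is in the final recombination, and it is cosmetic: the paper composes this map with $f$ and solves the variational inequality $S(f\circ\Gamma,D)$ via Theorem \ref{Thm_Tan} with the constant constraint map, whereas you inline that step as a Fan--Glicksberg fixed point of the product map $\Delta(x)\times S(d)$ --- the two mechanisms are equivalent.
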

\begin{proof}{}
	We first construct a map $H:D\rightarrow 2^{X^{*}}$ by $H=f\circ \,\Gamma$ where
	$\Gamma:D \rightarrow 2^{X}$ is defined by
	$$\Gamma(d)=S(F,K(d))~~ \textrm{for~any}~~ d \in D.$$ We observe that the solution set for QVI$(F,f,K)$ $(\ref{def1_eq2_infinite})$ is nonempty whenever a solution for the Stampacchia variational inequality $S(H,D)$ exists. Indeed, we assume that $\tilde{d} \in D$ solves $S(H,D)$. Thus an element $\tilde{x} \in \Gamma(\tilde{d})$ exists satisfying
	\begin{equation}
		\langle f(\tilde{x}),d-\tilde{d} \rangle \geq 0\quad~\textrm{for~each}~ d \in D.\label{thm1_eq2}
	\end{equation}
	Now, $\tilde{x} \in \Gamma(\tilde{d})$ indicates that  $\exists \, \tilde{x}^{*} \in F(\tilde{x})$~meeting the condition
	\begin{equation}
		\langle \tilde{x}^{*},z-\tilde{x} \rangle  \geq 0\quad\textrm{for~all}~ z\in K(\tilde{d}). \label{thm1_eq3}
	\end{equation}
	Therefore the couple $(\tilde{d},\tilde{x})$ is a solution for
	QVI$(F,f,K)$ $(\ref{def1_eq2_infinite})$.
	
	Hence, it is enough to show the occurrence of a solution for $S(H,D)$.
	It appears from \cite[Lemma 3.2]{Aussel_stabilty} and \cite[Lemma 3.1]{Aussel_stabilty} that
	$$M(F,K(d))=M(\textrm{conv}F,K(d))\subseteq S(\textrm{conv}F,K(d))=S(F,K(d)),~\textrm{for}~d \in D,$$
	where $ \textrm{conv}F:X \rightarrow 2^{X^{*}}$ is constructed as $\textrm{conv}F(x)=\textrm{conv}(F(x))$ for $x \in X$. Further, it occurs $S(F,K(d)) \subseteq M(F,K(d))$ as $F$ is pseudomonotone on the set $K(d)$. Hence we have for any $d \in D$,
	\begin{equation}
		\Gamma(d)=S(F,K(d))=M(F,K(d)). \label{eq 1_infinite}
	\end{equation}
	Since $K(d)$ is compact, the lemmas \cite[Lemma 2.1]{Aussel_2004} and
	\cite[Proposition 2.1]{Aussel_2004} jointly deduce that $M(F,K(d))\neq \emptyset$ and hence $\Gamma(d)\neq \emptyset$.
	Moreover, we get from (\ref{eq 1_infinite}) that $\Gamma(d)$ becomes convex closed set and thus $\Gamma$ becomes a non-empty closed convex valued map.
	
	Now, our aim is to show that $\Gamma$ satisfies the upper semi-continuity property. As the map $\Gamma$ is contained in a compact set $K(D)$, it is enough to show that the map $\Gamma$ is having closed graph. Let $(d_n)_{n\geq 1}$ in $D$ and $y_n \in \Gamma(d_n)$ be two sequences in order that $d_n \rightarrow d$ and $y_n \rightarrow y$. We aim to establish $y \in \Gamma(d)$. Since $y_n \in \Gamma(d_n)$ for each $n \in \mathbb{N}$, there is  $(y_n^{*})_n$ in $X^{*}$ with $y_n^{*}\in F(y_n)$ and
	\begin{equation}
		\langle y_n^{*},z-y_n \rangle  \geq 0,~\forall\, z\in K(d_n). \label{thm1_eq 2_infinite}
	\end{equation}
	Due to the fact that $F$ is \textcolor{black}{u.s.c.} at $y$, Proposition \ref{subnet_prop1} indicates that there exists $(y_{n_k}^{*})_k \subseteq (y_n^{*})_n$ with $y_{n_k}^{*} \rightarrow \textcolor{black}{y^*}$ for some $y^{*} \in F(y)$. Let $w$ be an element in $K(d)$. Since $K$ is lower semi-continuous on $D$ and $d_{n_k} \rightarrow d$, we get $w_{n_k}\in K(d_{n_k})$ having  $w_{n_k} \rightarrow w$. Hence it appears from (\ref{thm1_eq 2_infinite}),
	$$\langle y_{n_k}^{*},w_{n_k}-y_{n_k} \rangle  \geq 0.$$
	This yields by taking $k \rightarrow \infty$, $\langle y^{*},w-y \rangle  \geq 0$. As $w$ is an arbitrary element in $K(d)$, we can conclude that
	$\langle y^{*},w-y \rangle  \geq 0,$~for any $w\in K(d)$. Hence $\Gamma$ is a closed graph map. Therefore $\Gamma$ satisfies the upper semi-continuity property.
	
	The \textcolor{black}{mapping} $H=f\circ \Gamma$ is indeed \textcolor{black}{u.s.c.} on $D$ considering that $\Gamma$ is  \textcolor{black}{u.s.c.} and the function $f$ is continuous. Considering that $\Gamma(d)$ is convex for any $d \in D$ and $f$ is affine, we can easily verify that the set $H(d)$ becomes convex. Moreover, it turns out that $H(d)=f(\Gamma(d))$ is compact for any $d \in D$ as $f$ is continuous and $\Gamma(d)$ is compact. Hence $H$ \textcolor{black}{becomes an u.s.c. map with non-empty convex compact values}. \textcolor{black}{Consequently,} the variational inequality $S(H,D)$ consists one solution according to Theorem \ref{Thm_Tan} by considering the constant constraint map $K(x)=D$. Hence the proof follows.
	
\end{proof}

In the preceding result, the presence of solutions for QVI$(F,f,K)$ $(\ref{def1_eq2_infinite})$ have been shown through a result on existence of solutions for VI problem. Now, considering the fact that  QVI$(F,f,K)$ $(\ref{def1_eq2_infinite})$ is indeed a QVI problem, we now wish is to establish another result for QVI$(F,f,K)$ $(\ref{def1_eq2_infinite})$ over Banach space by applying \textcolor{black}{a preliminary} existence \textcolor{black}{result for QVI} problems. In fact, we require to presume $F$ to be convex valued rather than the pseudomonotonicity of $F$ as considered in Theorem \ref{Thm1_infinte_new}.


\begin{theorem}\label{Theorem3}
	Let $D$ be a non-empty convex
	compact subset of a Banach space $X$ and $f:X \rightarrow X^{*}$ be a continuous function. Assume
	$F:X \rightarrow 2^{X^{*}}$ and $K:D \rightarrow 2^{X}$ are mappings meeting the conditions:
	\begin{enumerate}
		\item[(i)]$K$ is non-empty convex valued lower semicontinuous map with closed graph;
		\item [(ii)] $F$ admits non-empty convex compact values and fulfills the upper semi-continuity property on \textrm{conv}$(K(D))$.
	\end{enumerate}
	Therefore the QVI\,$(F,f,K)$ $(\ref{def1_eq2_infinite})$ contains a solution whenever the set $K(D)$ becomes compact.
\end{theorem}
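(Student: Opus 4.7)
The plan is to apply the preliminary QVI existence result Theorem \ref{Thm_Tan} (due to Tan) directly, by reformulating QVI$(F, f, K)$ (\ref{def1_eq2_infinite}) as a standard quasi-variational inequality on the product Banach space $X \times X$. Treating $(d, x) \in X \times X$ as the pair variable, I would define $\tilde{K}: X \times X \to 2^{X \times X}$ by $\tilde{K}(d, x) := D \times K(d)$ and $\tilde{T}: X \times X \to 2^{X^* \times X^*}$ by $\tilde{T}(d, x) := \{f(x)\} \times F(x)$. Under the product duality pairing $\langle (u^*, v^*), (d, z) \rangle = \langle u^*, d \rangle + \langle v^*, z \rangle$, the QVI$(\tilde{T}, \tilde{K})$ inequality reads exactly as (\ref{def1_eq2_infinite}), so any solution of the auxiliary problem furnishes a solution of the original QVI$(F, f, K)$.

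For the ambient convex compact domain I would take $E := D \times \overline{\textrm{conv}}(K(D))$. Since $K(D)$ is compact in a Banach space, Mazur's theorem ensures $\overline{\textrm{conv}}(K(D))$ is compact and convex, hence $E$ is compact and convex in $X \times X$; moreover $\tilde{K}(d, x) \subseteq E$ because $K(d) \subseteq K(D)$.

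Next I would verify the hypotheses of Theorem \ref{Thm_Tan} for $\tilde{K}$ and $\tilde{T}$ on $E$. For $\tilde{K}$: each value $D \times K(d)$ is non-empty, convex and compact (compactness of $K(d)$ follows because it is a closed subset, by the closed graph of $K$, of the compact set $K(D)$); lower semi-continuity is inherited from $K$ since the $D$-factor is constant; the graph is closed because the graph of $K$ is. For $\tilde{T}$: each $\{f(x)\} \times F(x)$ is non-empty, convex and compact by the hypotheses on $F$; upper semi-continuity on $E$ reduces, because $\tilde{T}$ depends only on $x$, to the continuity of $f$ together with the upper semi-continuity of $F$ on the $x$-projection of $E$.

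Finally, Theorem \ref{Thm_Tan} applied to QVI$(\tilde{T}, \tilde{K})$ on the convex compact set $E$ yields $(\tilde{d}, \tilde{x}) \in \tilde{K}(\tilde{d}, \tilde{x}) = D \times K(\tilde{d})$ and $(f(\tilde{x}), \tilde{x}^*) \in \tilde{T}(\tilde{d}, \tilde{x})$ with $\tilde{x}^* \in F(\tilde{x})$ satisfying the product inequality, which unpacks to (\ref{def1_eq2_infinite}). The main subtlety I anticipate is the upper semi-continuity transfer: the stated hypothesis provides u.s.c.\ of $F$ on $\textrm{conv}(K(D))$, whereas the application on $E$ requires u.s.c.\ on $\overline{\textrm{conv}}(K(D))$; in infinite dimensions these sets can differ, so this point requires care (either via a charitable reading of the hypothesis, or by noting that the limit arguments inside the proof of Theorem \ref{Thm_Tan} only encounter points of $K(D) \subseteq \textrm{conv}(K(D))$ in the $x$-component, so u.s.c.\ on $\textrm{conv}(K(D))$ suffices).
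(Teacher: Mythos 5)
Your proposal is correct and follows essentially the same route as the paper: the authors also set $\tilde{D}=D\times\overline{\textrm{conv}}(K(D))$, define $T(d,x)=\{(f(x),x^{*}):x^{*}\in F(x)\}$ and $\tilde{K}(d,x)=D\times K(d)$, verify the hypotheses of Theorem \ref{Thm_Tan}, and unpack the resulting solution of QVI$(T,\tilde{K})$ into a solution of QVI$(F,f,K)$. Your remark on the gap between u.s.c.\ on $\textrm{conv}(K(D))$ versus on $\overline{\textrm{conv}}(K(D))$ is a fair observation that the paper passes over silently, and your suggested resolution (the $x$-components encountered lie in $K(\tilde d)\subseteq K(D)$) is the right way to discharge it.
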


\begin{proof}{}
\textcolor{black}{We set $\tilde{D}=D \times\overline{\textrm{conv}}(K(D))$.} Since $K(D)$ is compact, $\overline{\textrm{conv}}(K(D))$ is also compact and hence $\tilde{D}$ is a compact convex set.  Let us construct a set-valued map $T$ from the set $\tilde{D}$ to
$X^{*} \times X^{*}$ by $T(d,x)=\{(f(x),x^{*}): x^{*} \in F(x)\}$. Suppose that $(d,x)$ is an element in $D \times \overline{\textrm{conv}}(K(D))$. Clearly, $T(d,x)$ is non-empty convex and compact. Let $V_1 \times V_2$ be an open set in $X^{*} \times X^{*}$ containing $T(d,x)=f(x) \times F(x)$. As $V_1$ is an open set containing $f(x)$ and $f$ is continuous, $f^{-1}(V_1)$ is open  in $X$ containing $x$. Due to the fact that $F$ has the upper semi-continuity property and $V_2$ is an open set with $F(x)\subseteq V_2$, there is a neighbourhood $U'$ of $x$ with $F(U') \subseteq V_{2}$. Then  $ U= U' \cap f^{-1}(V_1) $ is an open set containing $x$. Let $M$ be an open set with $d \in M$. Then $(d,x) \in M\times U$ is an open set with $T(M\times U) \subseteq V_1 \times V_2$. Hence $T$ is upper semi-continuous on $D \times \overline{\textrm{conv}}(K(D))$.

Further, we define a map $\tilde{K}:\tilde{D}=D \times \overline{\textrm{conv}}(K(D)) \rightarrow 2^{\tilde{D}}$
as $\tilde{K}(d,x)=D \times K(d)$. Due to the fact $K$ is having closed graph and $D$ is a closed set, it is easy to check $\tilde{K}$ is having closed graph. We assert that $\tilde{K}$ meets the lower semi-continuity property considering that $K$ is \textcolor{black}{a l.s.c. map}.
Indeed, the map $\tilde{K}$ is non-empty, convex and compact valued considering that $D$ is convex compact and $K(d)$ is non-empty convex compact.

Hence QVI$(T,\tilde{K})$ accommodates a solution according to Theorem \ref{Thm_Tan} and hence we can find $(\tilde{d},\tilde{x}) \in \tilde{K}(\tilde{d},\tilde{x})=D \times K(\tilde{d})$ and $(f(\tilde{x}),\tilde{x}^{*}) \in T(\tilde{d},\tilde{x})=\{(f(\tilde{x}),\tilde{x}^{*}):\tilde{x}^{*} \in F(\tilde{x})\}$ satisfying
$$\langle (f(\tilde{x}), \tilde{x}^{*}),(d,z)-(\tilde{d},\tilde{x}) \rangle\geq 0,
~~\textrm{for~every}~ (d,z) ~\textrm{lies~in}~ D \times K(\tilde{d}).$$

Thus we can conclude that the QVI$(F,f,K)$ $(\ref{def1_eq2_infinite})$ has a solution.

\end{proof}

\subsection{Case of Unbounded Constraint Maps}
\textcolor{black}{The majority of well known existence theorem on VI problems} require the compactness
of the constraint set $K$ (\cite{Francisco-Pang}). In the year 2004, Aussel and Hadjisavvas \cite{Aussel_2004} derived the presence of a solution
for variational inequalities having unbounded constraint set with the help of \textcolor{black}{below-mentioned} coercivity \textcolor{black}{criterion} $(\bar{C})$ for the
map $T:K \rightarrow 2^{X^{*}}$:
\begin{eqnarray}
(\bar{C})&\quad& \exists \,r>0,~\textrm{for~all}~ x \in K\setminus \bar{B}(0,r),~ \exists\, y \in K~\textrm{satisfying}
~ \nonumber\\
&\quad& ||y||<||x||~\textrm{and}~\langle x^{*},x-y \rangle \geq 0, ~\textrm{for~any}~ x^{*} \in T(x),\nonumber
\end{eqnarray}
\textcolor{black}{where $\bar B(0,r)=\{z\in X: ||z||\leq r\}$}. Several coercivity conditions for variational inequalities having unbounded constraint set were considered and compared by Bianchi et al. \cite{BHS}. We adapted the coercivity condition $(\bar{C})$ to
the connection of quasi-variational inequality problems in this \textcolor{black}{article}.

The coercivity \textcolor{black}{criterion $(\bar{C}_{d})$ which will be considered} in this section in connection with the quasi-variational inequalities QVI$(F,f,K)$ $(\ref{def1_eq2_infinite})$ is constructed as: for the set-valued maps $K:D\rightarrow 2^X$ and $F:X \rightarrow 2^{X^{*}}$, the coercivity \textcolor{black}{criterion $(\bar{C}_{d})$ is said to be fulfilled} at $d \in D$ when
\begin{eqnarray}
(\bar{C}_{d})&\quad& \exists \, r_{d}>0,~\textrm{for~all}~ x \in K(d)\setminus \bar{B}(0,r_{d}), \exists \, y \in K(d)~\textrm{satisfying}
\nonumber\\
&\quad& ||y||<||x||~\textrm{and}~\langle x^{*},x-y \rangle \geq 0,~ \textrm{for~any}~ x^{*} \in F(x).\nonumber
\end{eqnarray}
It is worth to observe that the both the  coercivity conditions $(\bar{C}_{d})$ and $(\bar{C})$ match to each other if constraint map $K$ considered as constant map. Moreover, the condition $(\bar{C}_{d})$ automatically satisfied if $K$ is bounded map.


\begin{theorem}\label{Thm1_infinte}
Let $D$ be a non-empty convex
compact subset of a Banach space $X$ and $f:X \rightarrow X^{*}$ be a continuous affine function. Assume $F:X \rightarrow 2^{X^{*}}$ and
$K:D \rightarrow 2^{X}$ are mappings meeting the conditions:
\begin{enumerate}
	\item[(i)]$K$ is non-empty convex valued lower semicontinuous map with closed graph;
	\item [(ii)]$F$ is non-empty convex compact valued map satisfying the upper semi-continuity and pseudomonotonicity property on $\textrm{conv}(K(D))$.
\end{enumerate}
Then the QVI\,$(F,f,K)$ $(\ref{def1_eq2_infinite})$ contains a solution if the coercivity \textcolor{black}{criterion} $(\bar{C}_{d})$ is fulfilled for each $d \in D$ and there exists $r>\sup\{r_{d}: d \in D\}$ such that the set $K(D) \cap \bar{B}(0,r)$ is compact and $K(d) \cap \bar{B}(0,r)$ is non-empty for all $d \in D$.
\end{theorem}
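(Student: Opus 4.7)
The plan is to truncate $K$ at radius $r$ and reduce the unbounded case to the bounded case already established in Theorem \ref{Thm1_infinte_new}. Concretely, define $K_{r}:D \to 2^{X}$ by $K_{r}(d) = K(d) \cap \bar{B}(0,r)$, check that the truncated problem QVI$(F,f,K_{r})$ satisfies the hypotheses of Theorem \ref{Thm1_infinte_new}, and then use the coercivity criterion $(\bar{C}_{d})$ to upgrade a solution of the truncated problem to one of the original QVI$(F,f,K)$.

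To verify the hypotheses of Theorem \ref{Thm1_infinte_new} for $K_{r}$, the only delicate point is lower semicontinuity. Lemma \ref{lemma1_infin} requires $K(d) \cap B(0,r) \neq \emptyset$ for every $d \in D$, which is a priori stronger than the assumed non-emptiness of $K(d) \cap \bar{B}(0,r)$. However, $(\bar{C}_{d})$ forces the open-ball version: if $K(d) \cap B(0,r)$ were empty, then every point $x$ of the non-empty set $K(d) \cap \bar{B}(0,r)$ would have $||x|| = r > r_{d}$, and applying $(\bar{C}_{d})$ to such an $x$ would produce an element of $K(d)$ of norm strictly less than $r$, a contradiction. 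The remaining properties descend routinely to $K_{r}$: convex values and closed graph follow from intersecting with the convex closed set $\bar{B}(0,r)$, each $K_{r}(d)$ is closed inside the compact set $K(D) \cap \bar{B}(0,r)$ hence compact, and $K_{r}(D)$ is likewise compact.

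Applying Theorem \ref{Thm1_infinte_new} to $K_{r}$ then yields $(\tilde{d},\tilde{x}) \in D \times K_{r}(\tilde{d})$ and $\tilde{x}^{*} \in F(\tilde{x})$ with
\begin{equation*}
\langle f(\tilde{x}), d - \tilde{d} \rangle + \langle \tilde{x}^{*}, z - \tilde{x} \rangle \geq 0 \quad \textrm{for all } (d,z) \in D \times K_{r}(\tilde{d}).
\end{equation*}
Specialising to $z = \tilde{x}$ already gives $\langle f(\tilde{x}), d-\tilde{d}\rangle \geq 0$ for every $d \in D$, which is exactly the $d$-component required by $(\ref{def1_eq2_infinite})$. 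Hence it suffices to upgrade $\langle \tilde{x}^{*}, z-\tilde{x}\rangle \geq 0$ from $z \in K_{r}(\tilde{d})$ to $z \in K(\tilde{d})$, since adding the two pieces recovers the full inequality.

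The extension splits into two sub-cases. If $||\tilde{x}|| < r$, then for any $z \in K(\tilde{d})$ the convex combination $(1-t)\tilde{x} + tz$ lies in $K(\tilde{d})$ by convexity and has norm $< r$ for all small $t > 0$, so it belongs to $K_{r}(\tilde{d})$; dividing the resulting inequality by $t$ delivers $\langle \tilde{x}^{*}, z-\tilde{x}\rangle \geq 0$. The main obstacle is the boundary case $||\tilde{x}|| = r$, where a truncated solution could a priori fail to solve the full QVI. Here the coercivity is used a second time: since $||\tilde{x}|| = r > r_{\tilde{d}}$, $(\bar{C}_{\tilde{d}})$ produces $y \in K(\tilde{d})$ with $||y|| < r$ and $\langle \tilde{x}^{*}, \tilde{x}-y\rangle \geq 0$; but $y \in K_{r}(\tilde{d})$ also yields $\langle \tilde{x}^{*}, y-\tilde{x}\rangle \geq 0$, forcing $\langle \tilde{x}^{*}, y-\tilde{x}\rangle = 0$. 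For arbitrary $z \in K(\tilde{d})$ the convex combination $(1-t)y + tz$ lies in $K_{r}(\tilde{d})$ for small $t > 0$, and expanding the truncated inequality at this point and invoking $\langle \tilde{x}^{*}, y-\tilde{x}\rangle = 0$ gives $\langle \tilde{x}^{*}, z-\tilde{x}\rangle \geq 0$, finishing the argument.
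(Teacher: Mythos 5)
Your proposal is correct and follows essentially the same route as the paper: truncate $K$ at radius $r$, use the coercivity criterion to guarantee $K(d)\cap B(0,r)\neq\emptyset$ so that Lemma \ref{lemma1_infin} yields lower semicontinuity of $K_r$, solve the truncated problem, and then use coercivity again together with a convex-combination argument to extend the inequality from $K_r(\tilde d)$ to $K(\tilde d)$. The only (harmless) difference is that you invoke Theorem \ref{Thm1_infinte_new} as a black box for the truncated problem where the paper re-runs its construction of $\Gamma_r$ and $H_r$ inline, and your explicit case split on $\|\tilde x\|<r$ versus $\|\tilde x\|=r$ actually makes the final extension step slightly more careful than the paper's wording.
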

\begin{proof}{}
\textcolor{black}{Consider a map $K_{r}:D\rightarrow 2^{X}$ defined} as $K_{r}(d)=K(d)\cap \bar{B}(0,r)$ for $d \in D$. Because of $K(d) \cap \bar{B}(0,r)$ is non-empty, the coercivity condition $(\bar{C}_{d})$ indicates that we can find an element $z \in K(d)\cap B(0,r)$. Hence $K_{r}$ satisfies the lower semi-continuity property according to Lemma \ref{lemma1_infin}. Hence it is easy to observe that $K_{r}$ is closed, \textcolor{black}{l.s.c.} with non-empty convex values.

Define a map $H_{r}:D\rightarrow 2^{X^{*}}$ by $H_{r}=f\circ \Gamma_{r}$ where
$\Gamma_{r}:D \rightarrow 2^{X}$ is defined by
$$\Gamma_{r}(d)=S(F,K_{r}(d))~~ \textrm{for~any}~~ d \in D.$$ By \textcolor{black}{employing the same argument as} given in the proof of Theorem \ref{Thm1_infinte_new}, it occurs for any $d \in D$,
\begin{equation}
	\Gamma_{r}(d)=S(F,K_{r}(d))=M(F,K_{r}(d)). \label{eq 1_infinite}
\end{equation}
Since $K_{r}(d)=K(d)\cap \bar{B}(0,r)$ is compact, the lemmas \cite[Lemma 2.1]{Aussel_2004} and
\cite[Proposition 2.1]{Aussel_2004} jointly deduce that $M(F,K_{r}(d))\neq \emptyset$ and hence $\Gamma_{r}(d)\neq \emptyset$.
Moreover, we get from (\ref{eq 1_infinite}) that $\Gamma_{r}(d)$ becomes convex closed set and thus $\Gamma_{r}$ is eventually a non-empty closed convex valued map. \textcolor{black}{Following the proof of} Theorem \ref{Thm1_infinte_new}, it can be shown that the map $\Gamma_{r}$ is having a closed graph. Therefore it is \textcolor{black}{u.s.c.} as it is contained in the set $K(D)\cap \bar{B}(0,r)$ which is compact.

The \textcolor{black}{mapping} $H_{r}=f\circ \Gamma_{r}$ is indeed \textcolor{black}{u.s.c.} on $D$ considering that $\Gamma_{r}$ is  \textcolor{black}{u.s.c.} and the function $f$ is continuous. Considering that $\Gamma_{r}(d)$ is convex for any $d \in D$ and $f$ is affine, we can easily verify that the set $H_{r}(d)$ becomes convex. Moreover, it turns out that $H_{r}(d)=f(\Gamma_{r}(d))$ is compact for any $d \in D$ as $f$ is continuous and $\Gamma_{r}(d)$ is compact. Hence $H_{r}$ is a non-empty convex compact valued map satisfying the upper semi-continuity property.
Thus the variational inequality $S(H_{r},D)$ consists one solution according to Theorem \ref{Thm_Tan} by considering the constant constraint map $K(x)=D$. This \textcolor{black}{implies} that there exist $\tilde{d} \in D$ and $\tilde{x} \in \Gamma_{r}(\tilde{d})$
\textcolor{black}{satisfying},
\begin{equation}
	\langle f(\tilde{x}),d-\tilde{d} \rangle \geq 0,~\forall\, d \in D.\label{thm1_infit_eq2}
\end{equation}
Now $\tilde{x} \in \Gamma_{r}(\tilde{d})$ implies that $\tilde{x} \in K(\tilde{d})\cap \bar{B}(0,r)$ and \textcolor{black}{we obtain $\tilde{x}^{*}$ in $F(\tilde{x})$~ satisfying,
	\begin{equation}
		\langle \tilde{x}^{*},x-\tilde{x} \rangle  \geq 0,~\text{for each}~ x\in K(\tilde{d})\cap \bar{B}(0,r). \label{thm1_infit_eq3}
\end{equation}}
Now, if we are able to show that the above inequality (\ref{thm1_infit_eq3}) holds for any $z \in K(\tilde{d})$, then the pair $(\tilde{d},\tilde{x})$ solves the QVI $(\ref{def1_eq2_infinite})$. By using the coercivity condition $(\bar{C}_{\tilde{d}})$, \textcolor{black}{one can check that there is some} $\,\tilde{y} \in K(\tilde{d})\cap B(0,r)$ satisfying $\langle x^{*},\tilde{x} -\tilde{y}\rangle  \geq 0$,~for \textcolor{black}{every} $x^{*}\in F(\tilde{x})$. This together with (\ref{thm1_infit_eq3}) \textcolor{black}{shows that},
\begin{equation}
	\langle \tilde{x}^{*},\tilde{x} -\tilde{y}\rangle=0. \label{thm1_infit_eq4}
\end{equation}
Since $z \in K(\tilde{d})$ and $\tilde{y} \in K(\tilde{d})\cap B(0,r)$, the point $t\tilde{y}+(1-t)z \in K(\tilde{d})\cap \bar{B}(0,r)$ for some $t \in (0,1)$. Hence, we have from (\ref{thm1_infit_eq3}) and (\ref{thm1_infit_eq4}) that $\langle \tilde{x}^{*},z-\tilde{x} \rangle  \geq 0$. Since $z \in K(\tilde{d})$ is arbitrary, the inequality (\ref{thm1_infit_eq3}) holds for any $z \in K(\tilde{d})$. This completes proof.

\end{proof}
\section{Application to Dynamic Competitive Equilibrium Problem}
We will employ the existence theorems proved in Section \ref{mainresult} to show the occurrence of dynamic competitive equilibrium for time-dependent pure exchange economy defined over the time interval $[0,T], T>0$ (one may refer \cite{anello,cotrina,milasicon,milasi} for more information). For this purpose, we first characterize the dynamic competitive equilibrium problem as a QVI which is a specific case of the general problem QVI$(F,f,K)$ (\ref{def1_eq2_infinite}). Then, we demonstrate the occurrence of dynamic competitive equilibrium by employing Theorem \ref{Thm1_infinte_new} and Theorem \ref{Theorem3}. In order to deal with the dynamic competitive equilibrium problem involving a certain number of agents competing for different commodities over the time period $[0,T]$, we need the Lebesgue space $L^2([0,T],\mathbb{R}^m)$ along with the inner product $\llangle .,.\rrangle_m$ defined as, 
\begin{equation*}
\llangle h,g\rrangle_m= \int_{0}^{T} \langle h(t),g(t)\rangle_m\, dt,
\end{equation*}
where $\langle .,.\rangle_m$ represents Euclidean inner product. Further, the inner product $\llangle .,.\rrangle_m$ induces a norm on $L^2([0,T],\mathbb{R}^m)$, which we denote by $||.||_{L^2}$.

Suppose that $\mathcal J=\{1,2,\cdots, m\}$ is the set of goods and $\mathcal I=\{1,2,\cdots,n\}$ is the set of agents. Suppose $L=L^2([0,T],\mathbb{R}^m)$ and the ordered cone in $L$ is indicated as,
$$C_L=\{\alpha \in L : \alpha^j(t)~\text{is non-negative}~\text{for almost all}~t~\text{in}~[0,T],\,\forall \, j\in \mathcal J\}.$$ Let us denote the endowment and consumption plan of any agent $i$ by $e_i$ and $x_i$ in $C_L$, respectively. Considering $p_j(t)$ to be the price of the $j^{th}$ commodity at any instant $t$, we assume that the price vector $p=(p_1,p_2,\cdots p_m)\in L$ lies in the set $P$ defined as,
\begin{equation}
P=\bigg\{p\in C_L : \frac{1}{T} \int_{0}^{T} \sum_{j=1}^{m} p^j(t)dt=1\bigg\}.
\end{equation}

In order to obtain more preferable goods, the agents involved in this game compete with each other by storing, buying and selling goods over a certain time period. Suppose that the preference of agent $i$ relative to the consumption vector $x_i$ at any instant $t\in[0,T]$ is expressed through function $u_i:[0,T]\times \mathbb{R}^m\rightarrow \mathbb{R}$. Then, each agent intends to maximize his utility function $u_i$ not only at a particular instant $t$, but over the whole time period $[0,T]$. In particular, each agent $i$ wants to maximize the mean value utility function $\mathcal{U}_i:L^2([0,T],\mathbb{R}^m)\rightarrow \mathbb{R}$ given as,
\begin{equation}\label{U_i}
\mathcal{U}_i(x_i)=\int_{0}^{T} u_i(t,x_i(t)) dt.
\end{equation} 
Furthermore for any price vector $p\in P$, the expenditure of any agent $i$ cannot exceed his wealth due to endowments. Hence, the following budget constraint is applied:
\begin{align}\label{Mi}
M_i(p)=\big \{x_i\in C_L: \,\llangle p,x_i-e_i\rrangle_m \leq 0\big \}.
\end{align} 

The dynamic competitive equilibrium for time-dependent pure exchange economy is formally defined as follows  \cite{anello,milasicon,milasi}.
\begin{definition} \cite{anello,milasicon}
A price vector $\bar p\in P$ along with the consumption vector $\bar x\in \prod_{i\in\mathcal{I}} M_i(\bar p)$ forms a dynamic competitive equilibrium $(\bar p,\bar x)$ if,
\begin{align}
	&~~~\mathcal{U}_i(\bar x_i) = \max_{x_i\in M_i(\bar p)}\mathcal{U}_i(x_i)~\forall\,i\in \mathcal{I}, ~\text{and}\label{eqdef1}\\
	&\int_{0}^{T} \bigg [\sum_{i\in \mathcal{I}} (\bar x_i^{j} (t)-e_i^{j}(t))\bigg ] ~dt \leq 0~\forall\,j\in \mathcal{J}\label{eqdef2}.
\end{align}
\end{definition}

In the existing literature, the existence of dynamic competitive equilibrium is established through quasi-variational inequalities by assuming that the mean value utility functions satisfy either concavity \cite{anello,milasicon} or semi-strict quasi-concavity condition \cite{milasi}. We assume that one of the following set of assumptions (A1) or (A2) holds true for each $i\in \mathcal{I}$,
\begin{itemize}
\item[(A1)] 
\begin{itemize}
	\item[(a)] ${u}_i(t,w)$ is measurable w.r.t. $t$ for all $w\in \mathbb{R}^m$;
	\item[(b)] $u_i(t,w)$ is concave w.r.t. $w$ for almost all $t\in [0,T]$;
	\item[(c)] $u_i(t,.)\in C^1(\mathbb{R}^m_+)$ for almost all $t\in [0,T]$;
	\item[(d)] for any $j\in \mathcal J$, the function $\frac{\partial u_i}{\partial x_i^j}$ is measurable w.r.t. $t$ and $\nabla u_i$ meets the below-mentioned growth condition (see \cite{anello,rogers}),
	\begin{equation*}
		||\nabla u_i(t,w)||_{\mathbb{R}^m} \leq C_i ||w||_{\mathbb{R}^m} + g(t)~\forall\, w\in \mathbb{R}^m_+,~\text{for almost all}~t\in [0,T],
	\end{equation*}
	considering that $C_i>0$ and $g\in L^2([0,T],\mathbb{R}_+)$.
\end{itemize}
\item[(A2)] $\mathcal{U}_i:L^2([0,T],\mathbb{R}^m)\rightarrow \mathbb{R}$ defined as (\ref{U_i}) is quasi-concave and continuous.
\end{itemize}

Now, we consider a compact set $\tilde D\subset P$ (see \cite{brezis,milasicon}), 
\begin{align*}
\tilde D= \bigg \{&p\in L : p^j(t) \geq 0~\text{for all}~j\in\mathcal{J}, \sum_{j=1}^{m} p^j(t)=1 ~\text{for almost all}~t~\text{in}~[0,T],\\& p(t)=0~\text{if}~t\notin [0,T],~\lim_{u\rightarrow 0}||p(t+u)-p(t)||_{L^2}=0~\text{uniformly in}~p \bigg \}.
\end{align*}
The compactness of above set facilitates us to employ Theorem \ref{Thm1_infinte_new} and Theorem \ref{Theorem3} for deriving the occurrence of dynamic competitive equilibrium.

It can be observed that the constraint map $M_i$ defined in (\ref{Mi}) admits unbounded values. For the sake of mathematical ease, we consider a bounded map $\tilde K_i: \tilde D\rightarrow 2^L$ defined as, 
\begin{equation}
\tilde K_i(p)=M_i(p) \cap H \label{eqKi}
\end{equation} 
where, $H=\prod_{j\in \mathcal{J}} H_j$ and each set bounded convex set $H_j$ is defined as,
$$ H_j=\bigg\{\alpha\in L^2([0,T],\mathbb{R}) : \alpha(t)\geq 0~\text{for almost all}~t\in [0,T], \int_{0}^{T}\alpha(t)dt\leq r_j\bigg\}$$ 
by considering $r_j>\displaystyle \int_{0}^{T}\bigg(\sum_{i\in \mathcal{I}} e^j_i(t)\bigg)dt$.

Now, we characterize the above-stated time-dependent pure exchange economy as a quasi-variational inequality QVI (\ref{qvi}) given below:
\begin{align}
&~\text{To determine}~ (\bar p, \bar x)\in \tilde D\times \tilde K(\bar p)~\text{so that,}~\exists\,(\bar x^*_i)_{i\in \mathcal{I}}\in F(\bar x)~\text{fulfilling},\notag\\
&\llangle \sum_{i\in \mathcal{I}}(e_i-\bar x_i), p-\bar p \rrangle_{m} +\sum_{i\in \mathcal{I}}\llangle \bar x^*_i, y_i -\bar x_i\rrangle_m \geq 0,~\text{for all}~\,(p,y)\in \tilde D\times \tilde K(\bar p), \label{qvi}
\end{align}
where $\tilde K(p)=\prod_{i\in\mathcal{I}} \tilde K_i(p)$ is a bounded set as considered in (\ref{eqKi}) and the map $F:L^2([0,T], \mathbb{R}^{nm})\rightarrow 2^{L^2([0,T], \mathbb{R}^{nm})}$ is formed as:
\begin{equation} \label{F}
F(x)= \prod_{i\in \mathcal{I}} F_i(x_i)=
\begin{cases} \big{\{}(-\nabla u_1(x_1),\cdots,-\nabla u_n(x_n))\big{\}}, &\text{if (A1) holds}\\ 
	(N^a_{-\mathcal U_1}(x_1)\setminus \{0\},\cdots, N^a_{-\mathcal U_n}(x_n)\setminus \{0\}), &\text{if (A2) holds.}
\end{cases}
\end{equation}
In equation (\ref{F}), the symbol $\nabla u_i(x_i)$ denotes the function $\nabla u_i(.,x_i(.))$ and the symbol $N^a_{-\mathcal U_i}$ denotes the adjusted normal operator corresponding to the quasi-convex function $-\mathcal U_i$ (see Section \ref{Prel}).
\begin{lemma}\label{equivalence}
Consider that the conditions given in (A1) or (A2) hold true. If the pair $(\bar p, \bar x)$ solves the QVI (\ref{qvi}), then it is dynamic competitive equilibrium.
\end{lemma}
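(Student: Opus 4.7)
The plan is to split the single quasi-variational inequality (\ref{qvi}) into a price-component inequality and $n$ consumption-component inequalities, and then translate each block into one of the equilibrium conditions (\ref{eqdef1})--(\ref{eqdef2}). The inclusions $\bar p\in \tilde D\subset P$ and $\bar x_i\in \tilde K_i(\bar p)\subset M_i(\bar p)$ hold automatically from the definition of a solution of (\ref{qvi}), so only (\ref{eqdef1}) and (\ref{eqdef2}) need to be established.

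First, I would substitute $y=\bar x$ in (\ref{qvi}) to isolate the price block
$$\llangle \sum_{i\in\mathcal I}(e_i-\bar x_i),\,p-\bar p\rrangle_m \ge 0 \quad \forall\, p\in \tilde D.$$
For each $j_0\in \mathcal J$, I would test this with the price $p\in \tilde D$ whose $j_0$-th component equals $1$ on $[0,T]$ and whose remaining components vanish, and combine with the Walras-type inequality $\llangle \bar p,\bar x_i-e_i\rrangle_m\le 0$ that comes from $\bar x_i\in M_i(\bar p)$. Summing over $i\in\mathcal I$ then produces $\int_0^T\sum_i(\bar x_i^{j_0}(t)-e_i^{j_0}(t))\,dt\le 0$, which is exactly (\ref{eqdef2}).

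Next I would substitute $p=\bar p$ in (\ref{qvi}) and, for a fixed $i\in\mathcal I$, take $y_k=\bar x_k$ for $k\ne i$ (legitimate since $\bar x_k\in \tilde K_k(\bar p)$). This yields
$$\llangle \bar x_i^*,\,y_i-\bar x_i\rrangle_m \ge 0 \quad \forall\, y_i\in \tilde K_i(\bar p).$$
To lift this to the full budget set $M_i(\bar p)$, I would exploit the market-clearing relation (\ref{eqdef2}) together with the strict choice $r_j>\int_0^T\sum_k e_k^j(t)\,dt$: non-negativity of the components gives $\int_0^T \bar x_i^j(t)\,dt\le \int_0^T\sum_k \bar x_k^j(t)\,dt<r_j$, so $\bar x_i$ is strictly interior to $H$ in the $L^1$-average sense. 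For any $z_i\in M_i(\bar p)$, the convex combination $t z_i+(1-t)\bar x_i$ stays in $M_i(\bar p)$ by linearity of the budget, and belongs to $H$ for all sufficiently small $t>0$ by the strict interior property; hence it lies in $\tilde K_i(\bar p)$, and dividing the resulting inequality by $t$ gives $\llangle \bar x_i^*,z_i-\bar x_i\rrangle_m \ge 0$ for every $z_i\in M_i(\bar p)$.

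Finally, I would convert this variational inequality into utility maximization according to which hypothesis is in force. Under (A1), $\bar x_i^*=-\nabla \mathcal U_i(\bar x_i)$, so the concavity inequality $\mathcal U_i(z_i)\le \mathcal U_i(\bar x_i)+\llangle \nabla \mathcal U_i(\bar x_i),z_i-\bar x_i\rrangle_m$ immediately yields $\mathcal U_i(z_i)\le \mathcal U_i(\bar x_i)$ for all $z_i\in M_i(\bar p)$. Under (A2), $\bar x_i^*\in N^a_{-\mathcal U_i}(\bar x_i)\setminus\{0\}$ and the Aussel--Hadjisavvas characterization of Stampacchia solutions of the adjusted normal operator of a continuous quasi-convex function as global minimizers on the constraint set (see \cite{Aussel_siam}) gives the same conclusion. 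The step I expect to be the main obstacle is the lifting from $\tilde K_i(\bar p)$ to $M_i(\bar p)$: it is only available because of the strict inequality $\int_0^T \bar x_i^j<r_j$, which in turn requires both the market clearing from the first step and the strict choice of $r_j$, so the two blocks of the QVI must be handled in exactly the right order.
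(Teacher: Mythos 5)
Your proposal is correct and follows essentially the same route as the paper: split the QVI into the price block and the consumption blocks, derive market clearing first by testing with unit price vectors and the budget inequality, then use market clearing together with the strict choice $r_j>\int_0^T\sum_k e_k^j(t)\,dt$ to see that $\bar x_i$ lies strictly inside $H$, lift the variational inequality from $\tilde K_i(\bar p)$ to $M_i(\bar p)$ by a small convex combination, and finally pass to utility maximization via concavity under (A1) and the adjusted-normal-operator characterization under (A2). The only difference is cosmetic: you carry out the lifting step directly (divide by $t$) where the paper argues by contradiction, and your explicit remark that the two blocks must be treated in that order is exactly the point the paper's proof relies on.
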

\begin{proof}{}
Any vector $(\bar p, \bar x)$ solves QVI (\ref{qvi}) iff, both the conditions (\ref{eq15}) and (\ref{eq14}) hold, that is, $\bar x_i\in \tilde K_i(\bar p)$ is a solution for the following VI problem for each $i\in \mathcal{I}$:
\begin{equation}
	\exists\, \bar x_i^*\in F_i(\bar x_i)~\text{satisfying}~\llangle \bar x^*_i, y_i -\bar x_i\rrangle_m \geq 0,\enspace \text{for any}~ y_i\in \tilde K_i(\bar p),\label{eq15}
\end{equation} 
and $\bar p \in \tilde D$ is a solution for the following VI problem:
\begin{equation}
	\llangle \sum_{i\in \mathcal{I}}(e_i-\bar x_i), p-\bar p\rrangle_{m}\geq 0\enspace \text{for any}~ p\in \tilde D.\label{eq14}
\end{equation} 
We assert that the vector $\bar x$, which solves QVI (\ref{qvi}), also fulfills (\ref{eqdef2}). By using (\ref{eq14}) and the fact $\bar x\in M(\bar p)$, we obtain following inequality for any $p\in \tilde D$, 
$$\llangle \sum_{i\in \mathcal{I}}(e_i-\bar x_i), p\rrangle_{m}=\llangle \sum_{i\in \mathcal{I}}(e_i-\bar x_i),p-\bar p\rrangle_m+\llangle \sum_{i\in \mathcal{I}}(e_i-\bar x_i), \bar p\rrangle_m\geq 0.$$ 
In the above inequality, suppose $p=(p^k)_{k\in \mathcal{J}}\in P$ is considered as $p^j=1$ for some $j$ and $p^k=0~\text{if}~k\neq j$. Then, clearly $p\in \tilde D$ and we affirm that (\ref{eqdef2}) is true.

Now, we demonstrate that any solution of (\ref{eq15}) also solves (\ref{eqdef1}). For this, we first show that the vector $\bar x_i$ is a solution of the below-stated problem for each $i\in \mathcal{I}$, 
\begin{equation}
	\llangle \bar x^*_i, y_i -\bar x_i\rrangle_m \geq 0,\enspace \text{for each}~ y_i\in M_i(\bar p).\label{eqM}
\end{equation}
Suppose $\bar x_i$ is not a solution for (\ref{eqM}) for some $i\in \mathcal{I}$. Then,
\begin{equation}
	\exists\, y'_i\in M_i(\bar p)\setminus H~\text{s.t.}~ \llangle x^*_i, y'_i-\bar x_i\rrangle_m<0,~ \text{for all}~ \, x^*_i\in F_i(\bar x_i). \label{contr}
\end{equation} 
We obtain $\epsilon >0$ satisfying $B(\bar x_i, \epsilon)\cap C_L \subseteq H$, since we have following relation due to (\ref{eqdef2}),
$$\int_{0}^{T} \bar x_i^j(t)~dt\leq \int_{0}^{T} \sum_{i\in \mathcal{I}}\bar x_i^j(t)~dt \leq \int_{0}^{T} \sum_{i\in \mathcal{I}} e_i^j(t)~dt < r_j.$$
Assume that $0<\lambda <\frac{\epsilon}{||y'_i-\bar x_i||_{L^2}}$, then $y^\circ_i=\lambda y'_i+(1-\lambda) \bar x_i \in \tilde K_i(\bar p)$. Thus, by (\ref{eq15}) we obtain $\llangle \bar x^*_i, y'_i -\bar x_i\rrangle_m \geq 0$ which contradicts our hypothesis in (\ref{contr}).

Clearly, $M_i$ admits non-empty convex closed values for any $p\in \tilde D$. Suppose assumption (A1) holds, then $F_i(x_i)= \{-\nabla u_i(x_i)\}$ and one can verify that any solution of $(\ref{eqM})$ solves the maximization problem given in (\ref{eqdef1}) (see \cite{milasiconcave,milasicon}). Further, if (A2) holds, then $F_i(x_i)= (N^a_{-\mathcal U_i}(x_i)\setminus \{0\})$ and (\ref{eqdef1}) can be deduced from (\ref{eqM}) by following \cite[Proposition 5.16]{Aussel_chapter}.

\end{proof}
Let us prove the occurrence of a dynamic competitive equilibrium by employing Lemma \ref{equivalence} and the existence results established in Section \ref{mainresult}. 
\begin{theorem}\label{application}
Suppose that one of the following hypothesis hold:
\begin{itemize}
	\item[(a)] $u_i$ satisfies assumption (A1) for every $i\in \mathcal{I}$;
	\item[(b)] $\mathcal{U}_i$ satisfies assumption (A2) for every $i\in \mathcal{I}$ s.t. the map $F$ considered as $F(x)=\prod_{i\in \mathcal{I}} (N^a_{\mathcal{-U}_i}(x)\setminus\{0\})$ admits non-empty convex compact values and fulfills the upper semi-continuity property on conv$(\tilde K(\tilde D))$.
\end{itemize}
Then, the pure exchange economy admits a dynamic competitive equilibrium whenever $\tilde K_i$ is l.s.c. map over $\tilde D$ and the set $\tilde K_i(\tilde D)$ is compact for any $i\in \mathcal{I}$. 
\end{theorem}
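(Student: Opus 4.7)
The plan is to prove this by reducing the existence of a dynamic competitive equilibrium to the existence of a solution of QVI~(\ref{qvi}) via Lemma~\ref{equivalence}, and then invoking Theorem~\ref{Thm1_infinte_new} in case (a) or Theorem~\ref{Theorem3} in case (b). First I would identify QVI~(\ref{qvi}) with an instance of the abstract QVI$(F,f,K)$~(\ref{def1_eq2_infinite}) by taking the domain $D$ to be $\tilde D$, a convex compact subset of the Hilbert space $L^2([0,T],\mathbb{R}^m)$; the constraint map to be $K=\tilde K$ sending $\tilde D$ into $L^2([0,T],\mathbb{R}^{nm})$; the continuous affine map to be $f(x) = \sum_{i\in\mathcal{I}}(e_i-x_i)$; and the set-valued map to be $F$ as defined in (\ref{F}).

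Next I would verify the hypotheses of the respective existence theorems piece by piece. The set $\tilde D$ is convex and compact by its very construction. The constraint map $\tilde K(p)=\prod_{i\in\mathcal{I}}\tilde K_i(p)$ is convex valued as each $\tilde K_i(p)$ is the intersection of the convex sets $M_i(p)$ and $H$; it is non-empty because $e_i\in\tilde K_i(p)$ for every $p\in\tilde D$ (indeed $\llangle p,e_i-e_i\rrangle_m=0\le 0$ and $\int_0^T e_i^j(t)\,dt<r_j$); it has closed graph since $M_i$ has closed graph by continuity of the duality pairing and $H$ is closed; and it is lower semicontinuous by hypothesis. The image $\tilde K(\tilde D)=\prod_{i\in\mathcal{I}}\tilde K_i(\tilde D)$ is compact as a finite product of compact sets.

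For the operator $F$, in case (a) the single-valued map $F(x)=(-\nabla u_1(x_1),\dots,-\nabla u_n(x_n))$ is trivially non-empty compact valued; it is continuous from $L^2$ into $L^2$ thanks to the Caratheodory and growth conditions in (A1), and hence upper semicontinuous as a single-valued map; and it is pseudomonotone because concavity of each $u_i$ in (A1b) makes $-\nabla u_i$ a monotone (hence pseudomonotone) operator on $\mathrm{conv}(\tilde K(\tilde D))$. All hypotheses of Theorem~\ref{Thm1_infinte_new} are then satisfied and QVI~(\ref{qvi}) admits a solution $(\bar p,\bar x)$. In case (b), the hypothesis directly supplies that $F$ is non-empty convex compact valued and upper semicontinuous on $\mathrm{conv}(\tilde K(\tilde D))$, so Theorem~\ref{Theorem3} applies and again yields a solution $(\bar p,\bar x)$ of QVI~(\ref{qvi}). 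Lemma~\ref{equivalence} then converts this solution into the required dynamic competitive equilibrium.

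The main obstacle I anticipate is the careful justification of the properties of $F$ in case (a): specifically, that the Nemytskii-type operator $x\mapsto \nabla u(\cdot,x(\cdot))$ maps $L^2([0,T],\mathbb{R}^m)$ continuously into itself under the growth bound (A1d), and that concavity of $u_i$ together with the $C^1$ assumption (A1c) really yields monotonicity of $-\nabla u_i$ on the set $\mathrm{conv}(\tilde K(\tilde D))$. A secondary, mostly notational, nuisance is that prices and consumption bundles live in Hilbert spaces of different dimensions; this is harmless because the variables $d-\tilde d$ and $z-\tilde x$ never mix in the inequality defining QVI~(\ref{def1_eq2_infinite}), so the proofs of Theorems~\ref{Thm1_infinte_new} and~\ref{Theorem3} transfer verbatim to this split setting.
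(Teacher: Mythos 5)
Your proposal is correct and follows essentially the same route as the paper: reduce to QVI~(\ref{qvi}) via Lemma~\ref{equivalence}, verify the hypotheses on $\tilde D$, $\tilde K$ and $F$, and invoke Theorem~\ref{Thm1_infinte_new} for case (a) and Theorem~\ref{Theorem3} for case (b), with continuity of the Nemytskii operator and monotonicity from concavity handling case (a) exactly as in the paper. The only cosmetic difference is that the paper resolves the price/consumption dimension mismatch by formally embedding $\tilde D$ as $\tilde D\times\{0\}\times\cdots\times\{0\}$ in $L^2([0,T],\mathbb{R}^{nm})$ and padding $f$ with zeros, whereas you note the mismatch is harmless; both are fine.
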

\begin{proof}{}
According to Lemma \ref{equivalence}, we can ensure the existence of dynamic competitive equilibrium by showing that QVI (\ref{qvi}) admits a solution.
One can notice that QVI (\ref{qvi}) is a particular instance of QVI$(F,f,K)$ (\ref{def1_eq2_infinite}). In fact, we consider $X=X^*=L^2([0,T],\mathbb{R}^{nm})$ and $D=\tilde D\times\{0\}\cdots \{0\}$ is subset of the considered Banach space $X$. Let us construct $f:X\rightarrow X^*$ as 
$f(x)= \big(\sum_{i\in \mathcal{I}} (e_i-x_i),0,\cdots,0\big )$, the map $K:D\rightarrow 2^X$ is formed as $K(d)=\prod_{i\in\mathcal{I}} \tilde K_i(p)$ (from (\ref{eqKi})) for any $d=(p,0\cdots 0)\in D$ and $F:X\rightarrow 2^{X^*}$ is considered as,
\begin{equation*} 
	F(x)=
	\begin{cases} \{(-\nabla u_1(x_1),\cdots,-\nabla u_n(x_n))\}, &\text{if hypothesis (a) holds}\\ 
		(N^a_{-\mathcal U_1}(x_1)\setminus \{0\},\cdots, N^a_{-\mathcal U_n}(x_n)\setminus \{0\}), &\text{if hypothesis (b) holds.}
	\end{cases}
\end{equation*}

Let us observe that $D\subset L$ is a non-empty convex compact set and $f$ is a continuous affine function. 
Clearly, $M_i$ is a closed map (see \cite[Lemma 3.4 (ii)]{anello}) which admits non-empty convex values. Further, one can verify that $K$ is l.s.c. map due to our hypothesis on the map $\tilde K_i$. Hence, $K$ becomes a closed lower semi-continuous map which admits non-empty convex values.

Assume that the hypothesis (a) holds, then we can verify that the map $F$ considered as $F(x)= \{\prod_{i\in \mathcal{I}} (-\nabla u_i(x_i))\}$ is continuous \cite[Theorem 10.58]{rogers} and monotone. Consequently, 
the assumptions in Theorem \ref{Thm1_infinte_new} are fulfilled if hypothesis (a) holds and QVI (\ref{qvi}) admits a solution which is a dynamic competitive equilibrium. 

Again, if hypothesis (b) holds, then we can check that all the assumptions in Theorem \ref{Theorem3} are fulfilled. Hence, we ensure the presence of a dynamic competitive equilibrium.

\end{proof}

\begin{remark}
\begin{itemize}
	\item[(a)] The assumption that $\tilde K_i$ fulfills lower semi-continuity property for any $i\in \mathcal I$ is not too restrictive. In fact, one can verify that $M_i$ is l.s.c. over $\tilde D$ if the below-stated survivability condition holds for each $j\in \mathcal{J}$ (see \cite[Lemma 3.4]{anello}),
	$$ e_i^j(t) >0,~\text{for almost all}~t\in [0,T].$$ Eventually, $\tilde K_i$ becomes l.s.c. by virtue of \cite[Lemma 2.3]{cot2}.
	\item[(b)] If the function $\mathcal U_i$ is continuous and semi-strictly quasi-concave with no maximum in $L$, that is, for any $x_i\in L$ there exists $y_i\in L$ s.t. $\mathcal{U}_i(y_i)>\mathcal{U}_i(x_i)~\forall\,i\in \mathcal{I}$, then the Walras' law is fulfilled. In fact, for any $p\in P$~ if~$(\bar x_i)_{i\in\mathcal{I}}$~is a vector s.t.~$\bar x_i\in M_i(p)$ maximizes $\mathcal{U}_i$ then clearly the condition, $\sum_{i\in \mathcal{I}}\llangle p,\bar x_i-e_i\rrangle_m =0$ holds (refer to \cite{anello,cotrina,milasiconcave}).
\end{itemize}
\end{remark}

\section*{Acknowledgement(s)}
The first author acknowledges SERB, India for providing the financial assistance under $\big(\rm{MTR/2021/000164}\big)$. The second author offers her gratitude towards UGC, India for the financial aid given by them throughout this research work under $\big(\rm{1313/(CSIRNETJUNE2019)}\big)$.


\begin{thebibliography}{99}
\bibitem{anello} Anello, G., Rania, F.: Quasi-Variational Analysis of a Dynamic Walrasian Competitive Exchange Economy. Int. J. Math. Anal. 7, 2517--2534 (2013)
\bibitem{ansari} Ansari, Q.H., K{\"o}bis, E., Yao, J.C.: Vector Variational Inequalities and Vector Optimization: Theory and Applications. Springer, Cham (2018)
\bibitem{Aubin} Aubin, J.P., Ekeland, I.: Applied Nonlinear Analysis. Wiley, New York (1984)
\bibitem{Aussel_chapter} Aussel, D.: New developments in quasiconvex optimization. In: Al-Mezel,~S.A.R., Al-Solamy,~F.R.M., Ansari,~Q.H. (eds.): Fixed Point Theory, Variational Analysis and Optimization. CRC Press, Florida (2014)
\bibitem{Aussel_existence}
Aussel, D., Cotrina, J.: Quasimonotone quasivariational inequalities:
existence results and applications. J. Optim. Theory Appl. 158, 637-652 (2013)
\bibitem{Aussel_stabilty}
Aussel, D., Cotrina, J.: Stability of quasimonotone variational inequality under sign-continuity. J. Optim. Theory Appl. 158, 653-667 (2013)
\bibitem{Aussel_donato_Asrifa} Aussel, D., Donato,  M.B.,  Milasi, M., Sultana, A.: Existence results for quasi-variational inequalities
with applications to Radner equilibrium problems. Set-Valued Var. Anal. 29, 931--948 (2021)
\bibitem{Aussel_2004}
Aussel, D., Hadjisavvas, N.: On quasimonotone variational inequalities.
J. Optim. Theory Appl. 121,  445--450 (2004)
\bibitem{Aussel_siam} Aussel, D., Hadjisavvas, N.: Adjusted sublevel sets, normal operator, and quasi-convex programming. SIAM J. Opt. 16, 358--367 (2005)
\bibitem{Aussel-Sagratella}
Aussel, D., Sagratella, S.: Sufficient conditions to compute any solution of
a quasivariational inequality via a variational inequality. Math. Meth. Oper Res. 85, 3--18 (2017)
\bibitem{BHS} Bianchi, M., Hadjisavvas, N., Schaible, S.: Minimal coercivity conditions and exceptional families of elements
in quasimonotone variational inequalities. J. Optim. Theory Appl. 122, 1--17 (2004) 
\bibitem{blum_Oettli}
Blum, E., Oettli, W.: From optimization and variational inequalities to equilibrium problems. Math. Stud. 63, 123--145 (1994)
\bibitem{brezis} Brezis, H.: Functional analysis, Sobolev spaces and partial differential equations. Springer, Newyork (2011)
\bibitem{cot2} Cotrina, J., Hantoute, A., Svensson, A.: Existence of quasi-equilibria on unbounded constraint sets. Optimization. 71, 337--354 (2020)
\bibitem{cotrina} Cotrina, J., Z{\'u}{\~n}iga, J.: Time-dependent generalized Nash equilibrium problem. J. Opt. Theory Appl. 179, 1054--1064 (2018)
\bibitem{milasiconcave} Donato, M.B., Milasi, M., Vitanza, C.: Quasivariational Inequalities for a Dynamic competitive economic equilibrium problem. J. Inequal. Appl. 1--17 (2009)
\bibitem{milasicon} Donato, M.B., Milasi, M., Vitanza, C.: A new contribution to a dynamic competitive equilibrium problem. Appl. Math. Lett. 23, 148--151 (2010)
\bibitem{milasi} Donato, M.B., Milasi, M., Vitanza, C.: Evolutionary quasi-variational inequality for a production economy. Nonlinear Anal. Real World Appl. 40, 328--336 (2018)
\bibitem{Donato_Milasi} Donato, M.B., Milasi, M., Vitanza, C.: Variational problem, generalized convexity, and application to a competitive equilibrium problem. Num. Func. Anal. Optim. 35, 962--983 (2014)
\bibitem{Donato_Milasi_Villanacci} Donato, M.B., Milasi, M., Villanacci, A.: Incomplete financial markets model with nominal assets: Variational approach. J. Math. Anal. Appl. 457, 1353--1369 (2018)
\bibitem{Francisco-Pang} Facchinei, F., Pang, J.C.: Finite-Dimensional Variational Inequalities and Complementarity Problems-Volume I. Springer, New York (2003)
\bibitem{Hartman} Hartman, P., Stampacchia, G.: On some nonlinear elliptic differential functional equations. Acta Math. 115, 271--310 (1966)
\bibitem{Kein} Kien, B.T., Wong, N.C., Yao, J.C.: On the solution existence of generalized quasivariational inequalities with discontinuous multifunctions. J. Optim. Theory Appl. 135, 515--530 (2007)
\bibitem{rogers} Renardy, M., Rogers, R.C.: An introduction to partial differential equations. Springer Science \& Business Media, New York (2006)
%

%
%

%
%
%


%

%



%

%
%
%
%
%
%
%
\bibitem{Tan}
Tan, N.X.: Quasi-variational inequality in topological linear locally convex Hausdorff spaces. Math. Nachr. 122, 231-245 (1985)
\end{thebibliography}
\end{document}